\def\cl{\centerline}
\def\vs{\vspace*}
\def\Z{\mathbb{Z}}
\def\C{\mathbb{C}}
\numberwithin{equation}{section}
\newtheorem{theo}{Theorem}[section]
\newtheorem{defi}[theo]{Definition}
\newtheorem{rem}[theo]{Remark}
\newtheorem{exam}[theo]{Example}
\newtheorem{lemm}[theo]{Lemma}
\newtheorem{clai}{Claim}
\begin{document}
\begin{center}
{\bf\large Linear commuting maps and skew-symmertric biderivations of the deformative Schr$\ddot{\rm o}$dinger-Virasoro Lie algebras}
\footnote {Supported by the National Natural Science Foundation of China (No. 11431010, 11371278).

$^{\,\S}$Corresponding author: Kun Xu.}
\end{center}

\cl{Guangzhe Fan$^{\,*,\,\dag}$, Yucai Su$^{\,*,\,\P}$, Kun Xu$^{\,*,\,\S}$}

\cl{\small $^{\,\dag}$mathgzfan@126.com}
\cl{\small $^{\,\P}$ycsu@tongji.edu.cn}
\cl{\small $^{\,\S}$1531948@tongji.edu.cn}
\cl{\small $^{\,*}$School of Mathematical Sciences, Tongji University, Shanghai 200092, P. R. China}

\vs{8pt}

{\small\footnotesize
\parskip .005 truein
\baselineskip 3pt \lineskip 3pt
\noindent{{\bf Abstract:} In this paper, we investigate the skew-symmertric biderivations of the deformative Schr$\ddot{\rm o}$dinger-Virasoro Lie algebras
which contain the twisted and original deformative Schr$\ddot{\rm o}$dinger-Virasoro Lie algebras. As an application, we give the explicit form of each linear commuting map on the deformative Schr$\ddot{\rm o}$dinger-Virasoro Lie algebras. In particular, we obtain that there exist non-inner biderivations and non-standard linear commuting maps for the certain deformative Schr$\ddot{\rm o}$dinger-Virasoro Lie algebras.

\vs{5pt}

\noindent{\bf Key words:}  biderivations, commuting maps, Schr$\ddot{\rm o}$dinger-Virasoro Lie algebra, deformative Schr$\ddot{\rm o}$dinger-Virasoro Lie algebras

\noindent{\it Mathematics Subject Classification (2010):} 17B05, 17B40, 17B65, 17B68.}}
\parskip .001 truein\baselineskip 6pt \lineskip 6pt

\section{Introduction}
Throughout this paper, we denote by $\Z,\, \C$ the sets of integers, complex numbers respectively. We assume that all vector spaces are based on $\C$, unless otherwise stated.\par

It is well known that the infinite-dimensional Schr$\ddot{\rm o}$dinger Lie algebras and the Virasoro algebra play important roles in many areas of mathematics and physics. In order to investigate the free Schr$\ddot{\rm o}$dinger equations, the twisted and original Schr$\ddot{\rm o}$dinger-Virasoro Lie algebras were introduced by \cite{MH1} in the context of non-equilibrium statistical physics. In \cite{CJ}, the author introduced the deformations of the Schr$\ddot{\rm o}$dinger-Virasoro Lie algebras. In this paper, we consider the following Lie algebras, which are referred to as \emph{deformative Schr$\ddot{\rm o}$dinger-Virasoro Lie algebras ${\mathscr{L}}(\lambda,\mu,s)$}. Fix
 the complex numbers $\lambda, \mu$ and $s=0$ or $\frac12$, the Lie algebra ${\mathscr{L}}(\lambda,\mu,s)$ has $\C$-basis
$\{L_{n}, M_{n}, Y_{n+s}~|~n\in \Z\}$
with the following nontrivial Lie brackets:
\begin{equation}\label{101}
[L_{n}, L_{m}]=(m-n)L_{n+m},
\end{equation}
\begin{equation}\label{102}
[L_{n}, M_{m}]=(m-\lambda n+2\mu)M_{n+m},
\end{equation}
\begin{equation}\label{103}
[L_{n}, Y_{m+s}]=(m+s-\frac{\lambda+1}{2}n+\mu)Y_{n+m+s},
\end{equation}
\begin{equation}\label{104}
[Y_{n+s}, Y_{m+s}]=(m-n)M_{n+m+2s},
\end{equation}
where $n, m \in \Z$. Obviously, ${\mathscr{L}}(\lambda,\mu,s)$ is a generalization of the Schr$\ddot{\rm o}$dinger-Virasoro Lie algebras.
Note that ${\mathscr{L}}(\lambda,\mu,s)$ contains the Schr$\ddot{\rm o}$dinger-Virasoro Lie algebras and their deformations.
For instance,
\begin{itemize}
  \item ${\mathscr{L}}(0,0,s)$ is the well-known \emph{Schr$\ddot{\rm o}$dinger-Virasoro Lie algebra} introduced in \cite{MH1}. Its structures and representations have been widely studied in  \cite{HS, JS2, WY}.

  \item ${\mathscr{L}}(\lambda,\mu,0)$ is called the \emph{twisted deformative Schr$\ddot{\rm o}$dinger-Virasoro Lie algebras} whose structure theories studied in \cite{JS4, WLX}.

  \item ${\mathscr{L}}(\lambda,\mu,\frac{1}{2})$ is called the \emph{original deformative Schr$\ddot{\rm o}$dinger-Virasoro Lie algebras}. In
  \cite{JW,JS3}, the authors obtained some results about structures.

  \item ${\mathscr{L}}(\lambda,0,s)$ is also a class of the deformative Schr$\ddot{\rm o}$dinger-Virasoro Lie algebras. In \cite{FLZ}, the author investigated the Lie bialgebra structures.
\end{itemize}

As is well known, derivations and generalized derivations are very important subjects in the research of both algebras and their generalizations. In recent years, biderivations have aroused many scholars' great interests in \cite{XWH, B1, B2, WYC, WY, FD, ZFLW, N, HWX, DW, C}. In \cite{B2}, Bre$\check{s}$ar showed that all biderivations on commutative prime rings are inner bidrivations and they also determined the biderivations of semiprime rings. The notation of biderivation of Lie algebras was introduced in \cite{WYC}. In addition, in \cite{WY,C} the authors obtained that the skew-symmetric biderivations of the Schr$\ddot{\rm o}$dinger-Virasoro algebra and a simple generalized Witt algebra are inner biderivations. Furthermore, in \cite{HWX} the authors determined all the skew-symmetric biderivations of $W(a, b)$ and found that there exist non-inner biderivations. In 1957, the first important result on linear (or additive) commuting maps was introduced by Posners in \cite{PEC}. The author of \cite{B2} described that commuting maps on an associative algebra have significant application to other important problems (e.g., Lie derivations, biderivations, etc).
Moreover, linear commuting maps of some Lie algebras were investigated in \cite{WY, HWX, DW, C}. Recently, the theory of biderivations and linear commuting maps have become one of research focuses in the Lie theory. Motivated by this reason, we attempt to investigate the linear commuting maps and skew-symmetric biderivations of some important Lie algebras. We know that the linear commuting maps and skew-symmetric biderivations of ${\mathscr{L}}(0,0,0)$ were studied in \cite{WY}. Thus, the results of this paper are more generic.

This paper is organized as follows. In Section $2$, we review some definitions and conclusions on biderivations of Lie algebras. In Section $3$, we compute the skew-symmetric biderivations of ${\mathscr{L}}(\lambda,\mu,s)$. In particular, we obtain that there exist non-inner biderivations for the certain ${\mathscr{L}}(\lambda,\mu,s)$. In Section $4$, we give the explicit form of each linear commuting map on ${\mathscr{L}}(\lambda,\mu,s)$.

\section{Preliminaries and main results}
Firstly, we shall recall some definitions and conclusions about biderivations of Lie algebras in \cite{WY,C}. In this section, we assume that $L$ is a Lie algebra over $\C$.
\begin{defi}\label{2.1}\rm
A bilinear map ${\phi}$: $L\times L\longrightarrow L$ is called \emph{skew-symmetric} if ${\phi}(x, y)$= $-{\phi}(y, x)$ for all $x, y\in$ $L$.
\end{defi}

\begin{defi}\label{2.2}\rm
A bilinear map ${\phi}$: $L\times L\longrightarrow L$ ia called
a \emph{biderivation} if it satisfies the following two axioms:
\begin{center}
$\phi([x,y],z)=[x,\phi(y,z)]+[\phi(x,z),y]$
\end{center}
\begin{center}
$\phi(x,[y,z])=[\phi(x,y),z]+[y,\phi(x,z)]$
\end{center}
for any $x, y, z \in L$.
\end{defi}

\begin{exam}\label{2.3}\rm
Let $\lambda \in$ $\C$, then the map ${\phi_{\lambda}}$: $L\times L\longrightarrow L$, sending ${(x, y)}$ to
$\lambda[x, y]$, is a biderivation of $L$. All biderivations of this kind are called \emph{inner biderivations} of $L$.
\end{exam}

\begin{rem}\label{2.4}\rm
It is straight to check that every inner biderivation ${\phi}_{\lambda}$ is a skew-symmetric biderivation.
\end{rem}

By \cite{WY,C}, the following two results are straightforward to verify.
\begin{lemm}\label{2.5}
Let $\phi$ be a skew-symmetric biderivation on $L$, then
\begin{equation*}
\aligned
&[\phi(x,y),[u,v]]=[[x,y],\phi(u,v)]
\endaligned
\end{equation*}
for any $x,y,u,v\in L$. In particular,
$[\phi(x,y), [x, y]]=0$.

\end{lemm}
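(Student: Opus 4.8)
The plan is to extract everything from a single ``master relation'' obtained by evaluating $\phi([x,y],[u,v])$ in two different ways and comparing. First I would apply the first axiom of Definition~\ref{2.2} in the outer slot and then the second axiom to each resulting term, obtaining
\[
\phi([x,y],[u,v])=[x,[\phi(y,u),v]]+[x,[u,\phi(y,v)]]+[[\phi(x,u),v],y]+[[u,\phi(x,v)],y].
\]
Dually, applying the second axiom first and then the first axiom gives a second expansion with the roles of the two brackets interchanged. Equating the two expansions and repeatedly invoking the Jacobi identity to push the ``inner'' derived terms outward, all the genuinely cubic terms cancel in pairs, leaving a relation purely among terms of the shape $[\phi(\cdot,\cdot),[\cdot,\cdot]]$. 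After using skew-symmetry of $\phi$ and antisymmetry of the bracket to normalize signs, this master relation, after renaming variables, takes the form
\[
[\phi(x,y),[u,v]]+[\phi(u,v),[x,y]]=[\phi(x,v),[u,y]]+[\phi(u,y),[x,v]]
\]
for all $x,y,u,v\in L$.

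The key point, and the main obstacle, is that this identity does not yet give the claim: writing $A:=[\phi(x,y),[u,v]]+[\phi(u,v),[x,y]]$ and $B:=[\phi(x,v),[u,y]]+[\phi(u,y),[x,v]]$, the master relation only asserts $A=B$, whereas the assertion is equivalent to $A=0$. To produce a second, complementary relation I would first dispose of the ``in particular'' statement. Specializing the master relation at $u=x$ and using $\phi(x,x)=0$, $[x,x]=0$ yields $[\phi(x,y),[x,v]]=[[x,y],\phi(x,v)]$; putting $v=y$ here and using skew-symmetry gives $[\phi(x,y),[x,y]]=-[\phi(x,y),[x,y]]$, so $[\phi(x,y),[x,y]]=0$ since the base field is $\C$.

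With the diagonal identity $[\phi(x,y),[x,y]]=0$ in hand, I would polarize it in two stages. Replacing $x$ by $x+u$ and discarding the two diagonal terms gives $[\phi(x,y),[u,y]]+[\phi(u,y),[x,y]]=0$; polarizing this in turn by $y\mapsto y+v$, and again dropping the terms that vanish by the relation just obtained (at $y$ and at $v$), one arrives precisely at $A+B=0$. Combining this with the master relation $A=B$ forces $2A=0$, hence $A=0$; that is,
\[
[\phi(x,y),[u,v]]=-[\phi(u,v),[x,y]]=[[x,y],\phi(u,v)],
\]
which is the desired identity. The only delicate steps are the Jacobi bookkeeping that collapses the two-way expansion into the master relation, and the recognition that this relation alone is insufficient and must be paired with the polarized commuting identity; everything else is routine bilinear algebra over $\C$.
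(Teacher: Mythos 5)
The paper itself offers no proof of Lemma~\ref{2.5} (it is quoted from \cite{WY,C} as ``straightforward to verify''), so your argument must stand on its own, and its skeleton is sound: expanding $\phi([x,y],[u,v])$ in the two orders, collapsing the equality with the Jacobi identity, extracting the diagonal identity, polarizing twice, and combining. I checked the two expansions and the Jacobi pairings: they do yield, after renaming variables and normalizing signs, exactly your master relation $[\phi(x,y),[u,v]]+[\phi(u,v),[x,y]]=[\phi(x,v),[u,y]]+[\phi(u,y),[x,v]]$ (your phrase ``the cubic terms cancel in pairs'' is loose --- each pair of nested terms merges via Jacobi into a single term of the shape $[\phi(\cdot,\cdot),[\cdot,\cdot]]$ rather than cancelling --- but the stated outcome is correct). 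Likewise the two-stage polarization of $[\phi(x,y),[x,y]]=0$ correctly gives $A+B=0$, which together with $A=B$ and characteristic zero yields $A=0$, i.e.\ $[\phi(x,y),[u,v]]=[[x,y],\phi(u,v)]$, with the ``in particular'' clause being the diagonal identity already obtained.

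The one step that fails as written is the specialization ``$u=x$'': substituting $u=x$ into your master relation makes its two sides literally identical, so it yields only $0=0$, and no terms $\phi(x,x)$ or $[x,x]$ appear at all, contrary to your parenthetical. The specialization you evidently intend is $v=x$ (equivalently $u=y$): then the right-hand side becomes $[\phi(x,x),[u,y]]+[\phi(u,y),[x,x]]=0$, and the left-hand side gives $[\phi(x,y),[u,x]]=[[x,y],\phi(u,x)]$, which after renaming $u$ as $v$ and flipping the two compensating signs is precisely your intermediate identity $[\phi(x,y),[x,v]]=[[x,y],\phi(x,v)]$; from there the substitution $v=y$, the polarizations, and $2A=0$ go through verbatim, so the proof is correct once this index is fixed. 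A further remark: the detour through the diagonal identity is not strictly necessary. Setting $A(x,y,u,v)=[\phi(x,y),[u,v]]+[\phi(u,v),[x,y]]$, the form $A$ is alternating in each pair and invariant under swapping the two pairs, and these built-in symmetries combined with the master relation $A(x,y,u,v)=A(x,v,u,y)$, applied three times along a chain of renamings, already force $A=-A$, hence $A=0$ over $\C$; so the master relation alone does suffice, though your polarization route is a perfectly valid (and perhaps more transparent) way to finish.
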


\begin{lemm}\label{2.6}\rm
Let $\phi$ be a skew-symmetric biderivation on $L$. If $[x,y]=0$, then $\phi(x,y)\in C_{L}([L,L])$, where $C_{L}([L, L])$ is the centralizer of $[L, L]$.
\end{lemm}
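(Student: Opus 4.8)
The plan is to deduce this directly from Lemma~\ref{2.5}, since the substantive work is already packaged there. First I would unwind the definition of the centralizer: to establish $\phi(x,y)\in C_L([L,L])$ it suffices to verify that $[\phi(x,y),w]=0$ for every $w\in[L,L]$. Because $[L,L]$ is by definition the linear span of the brackets $[u,v]$ with $u,v\in L$, and because the map $w\mapsto[\phi(x,y),w]$ is linear, it is enough to check this vanishing on each generator $[u,v]$ and then extend.

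Next I would invoke the identity of Lemma~\ref{2.5}, namely $[\phi(x,y),[u,v]]=[[x,y],\phi(u,v)]$, which is valid for all $u,v\in L$. Substituting the hypothesis $[x,y]=0$ into the right-hand side gives $[[x,y],\phi(u,v)]=[0,\phi(u,v)]=0$, whence $[\phi(x,y),[u,v]]=0$ for every $u,v\in L$. Spreading this over linear combinations by the bilinearity of the bracket then yields $[\phi(x,y),w]=0$ for all $w\in[L,L]$, which is precisely the assertion $\phi(x,y)\in C_L([L,L])$.

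There is essentially no serious obstacle here: the entire content resides in Lemma~\ref{2.5}, and the only points needing care are the reduction to the generators $[u,v]$ of $[L,L]$ and the final bilinear extension. Both are routine, so the statement follows at once once Lemma~\ref{2.5} is in hand.
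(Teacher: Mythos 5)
Your argument is correct and is exactly the intended one: the paper gives no explicit proof (it cites \cite{WY,C} and calls the lemma straightforward), and the standard verification there is precisely your deduction from Lemma~\ref{2.5}, setting $[x,y]=0$ and extending linearly over the span $[L,L]$. Nothing is missing.
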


\section{Skew-symmertric biderivations of ${\mathscr{L}}$}
In this section, we would like to compute the skew-symmetric biderivations of ${\mathscr{L}}(\lambda,\mu,s)$, which is denoted by ${\mathscr{L}}$ for simplicity.

\begin{lemm}\label{3.1}\rm
\begin{itemize}
\item[{\rm(1)}] The center $Z({\mathscr{L}})$ of ${\mathscr{L}}$ is given by
\begin{equation*}
Z({\mathscr{L}})=\left\{\begin{array}{lll}
{\C}M_{-2\mu}&\mbox{if \ }{\lambda=0,~ \mu \in \frac{1}{2}}\Z;\\[4pt]
0 & \mbox otherwise.
\end{array}\right.
\end{equation*}
\item[{\rm(2)}] For $x\in {\mathscr{L}}$, denote $\overline{x}=x+[{\mathscr{L}}, {\mathscr{L}}]$. Then
\begin{equation*}
{\mathscr{L}}/[{\mathscr{L}}, {\mathscr{L}}]=\left\{\begin{array}{lll}
{\C}\overline{ Y}_{-\mu}&\mbox{if \ }{\lambda=-3,~ \mu \in s+\Z};\\[4pt]
0 & \mbox otherwise.
\end{array}\right.
\end{equation*}
\item[{\rm(3)}] The centralizer of $[{\mathscr{L}}, {\mathscr{L}}]$ coincides with $Z({\mathscr{L}})$, i.e., $C_{{\mathscr{L}}}([{\mathscr{L}}, {\mathscr{L}}])=Z({\mathscr{L}})$.
\end{itemize}
\end{lemm}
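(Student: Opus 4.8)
The plan is to argue entirely at the level of the basis, writing a general element as a finite sum $z=\sum_n(a_nL_n+b_nM_n+c_nY_{n+s})$ and reading off coefficients after bracketing $z$ against single basis vectors. For part (1), I would first impose $[z,L_m]=0$ for all $m$. Comparing the $L$-coefficients forces $a_n(m-n)=0$ for all $m$, hence $a_n=0$ for every $n$. The $M$-coefficients give $b_n(n-\lambda m+2\mu)=0$ for all $m$, which forces $b_n=0$ when $\lambda\neq0$ and leaves only the index $n=-2\mu$ when $\lambda=0$; the $Y$-coefficients give $c_n(n+s-\frac{\lambda+1}{2}m+\mu)=0$ for all $m$, forcing $c_n=0$ when $\lambda\neq-1$ and leaving only the index $n+s=-\mu$ when $\lambda=-1$. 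Since $\lambda$ cannot equal both $0$ and $-1$, at most one family survives, and a direct check using $[M,M]=[M,Y]=0$ shows $M_{-2\mu}$ is genuinely central whenever $\lambda=0$ and $-2\mu\in\Z$. For the leftover candidate $Y_{-\mu}$ in the case $\lambda=-1$, bracketing against $Y_{m+s}$ gives $[Y_{-\mu},Y_{m+s}]=(m+\mu+s)M_{n+m+2s}\neq0$ for suitable $m$, so this term is killed and the center is $0$. This yields exactly the stated dichotomy.

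For part (2), I would compute the span of the image of each bracket separately. The bracket $[L_n,L_m]=(m-n)L_{n+m}$ already produces every $L_k$. The bracket $[Y_{n+s},Y_{m+s}]=(m-n)M_{n+m+2s}$ produces every $M_k$ with $k\in\Z$, since for any target index one can solve $n+m=k-2s$ with $m\neq n$; thus $M$ never contributes to the quotient. The only delicate generator is $Y$, which arises solely from $[L_n,Y_{m+s}]$; writing the coefficient as $k+s+\mu-n\cdot\frac{\lambda+3}{2}$ shows that when $\lambda\neq-3$ one can choose $n$ to make it nonzero, so all $Y_{k+s}$ lie in $[\mathscr{L},\mathscr{L}]$, whereas when $\lambda=-3$ the coefficient collapses to $k+s+\mu$, which vanishes precisely at the index $k+s=-\mu$. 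Hence the only basis vector that can be missing from $[\mathscr{L},\mathscr{L}]$ is $Y_{-\mu}$, and only when $-\mu$ is an admissible $Y$-index, i.e. $\mu\in s+\Z$; this gives the claimed quotient.

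Part (3) then follows quickly from the previous computations. The containment $Z(\mathscr{L})\subseteq C_{\mathscr{L}}([\mathscr{L},\mathscr{L}])$ is immediate. For the reverse, the key point is that every $L_m$ lies in $[\mathscr{L},\mathscr{L}]$, so any $z\in C_{\mathscr{L}}([\mathscr{L},\mathscr{L}])$ must satisfy $[z,L_m]=0$ for all $m$; by the analysis in part (1) this already restricts $z$ to a multiple of $M_{-2\mu}$ (when $\lambda=0$) or of $Y_{-\mu}$ (when $\lambda=-1$). In the first case $z$ is central, and in the second case the relevant $Y_{m+s}$ all lie in $[\mathscr{L},\mathscr{L}]$ (as $\lambda=-1\neq-3$), so commuting with them forces $z=0$; either way $z\in Z(\mathscr{L})$.

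I expect the main obstacle to be the careful bookkeeping of the exceptional values of $\lambda$: one must notice that $[Y,Y]$ already exhausts the entire $M$-space independently of $\lambda$ (so that $M$ plays no role in the quotient), and simultaneously track the two distinct degeneracies $\lambda=0$ and $\lambda=-1$ relevant to the center against the single degeneracy $\lambda=-3$ relevant to the abelianization, without conflating them or prematurely declaring $Y_{-\mu}$ central.
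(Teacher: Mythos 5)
Your proof is correct, and it is exactly the direct basis-coefficient verification that the paper leaves to the reader (its ``proof'' of Lemma 3.1 is just the remark that the claims follow easily from the Lie brackets): all three parts, including the bookkeeping of the exceptional values $\lambda=0,-1$ for the center and $\lambda=-3$ for the abelianization, check out. One trivial slip: in part (1) the bracket should read $[Y_{-\mu},Y_{m+s}]=(m+s+\mu)M_{m+s-\mu}$ rather than $(m+\mu+s)M_{n+m+2s}$, but this does not affect the argument.
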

\begin{proof}
It can be easily obtained by the Lie brackets of ${\mathscr{L}}$.
\end{proof}

For convenience, we first introduce two kinds of skew-symmetric biderivations.
 \begin{itemize}
  \item[{\rm(1)}]  For ${\mathscr{L}}$ with ${\lambda=1, \mu  \in  s+\frac{1}{2}}\Z$, define the following skew-symmetric bilinear map: \par
\begin{equation}\label{301}
\phi_0: {\mathscr{L}}\times {\mathscr{L}}\rightarrow{\mathscr{L}}, \quad (L_{n}, L_{m})\mapsto(m-n)M_{n+m-2\mu},
\end{equation}
the others map to 0.
  \item[{\rm(2)}] For ${\mathscr{L}}$ with ${\lambda=1, \mu  \in  s}+\Z$, define the following skew-symmetric bilinear map:
\begin{equation}\label{302}
\begin{array}{ll}
\phi_1:& {\mathscr{L}}\times {\mathscr{L}}\rightarrow{\mathscr{L}}
\\
&(L_{n}, L_{m})\mapsto (m-n)Y_{n+m-\mu},
\\
&(L_{n}, Y_{m+s})\mapsto (m+s-n+\mu)M_{n+m+s-\mu},
\\
&(Y_{m+s}, L_{n})\mapsto (n-m-s-\mu)M_{n+m+s-\mu},
\end{array}
\end{equation}
the others map to $0$.
\end{itemize}
Obviously, we see that $\phi_0$ and $\phi_1$ are skew-symmetric non-inner biderivations of ${\mathscr{L}}$.

\begin{theo}\label{3.2}
Let $\phi$ be a skew-symmetric biderivation of ${\mathscr{L}}$, then we have
\begin{equation*}
\phi(x, y)=\left\{\begin{array}{lll}
\alpha[x, y]+ \beta\phi_0(x, y)&\mbox{if \ }{\lambda=1,~ \mu \in s+ \frac{1}{2}}+\Z;\\[4pt]
\alpha[x, y]+ \beta\phi_0(x, y)+ \gamma\phi_1(x, y) &\mbox{if \ }{\lambda=1,~ \mu \in s}+\Z;\\[4pt]
\alpha[x, y] & \mbox otherwise,
\end{array}\right.
\end{equation*}
for all $x, y \in {\mathscr{L}}$, where $\alpha, \beta, \gamma\in \C$, $\phi_0$ and $\phi_1$ are given by \eqref{301} and \eqref{302}.
\end{theo}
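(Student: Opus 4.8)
The plan is to exploit two structural facts: that $\mathscr{L}$ is graded, with $L_n,M_n$ in degree $n$ and $Y_{n+s}$ in degree $n+s$ (so every bracket is homogeneous of degree $0$), and that for a biderivation $\phi$ each partial map $\phi(\cdot,z)$ and $\phi(x,\cdot)$ is a derivation of $\mathscr{L}$ by Definition \ref{2.2}. First I would decompose $\phi$ into its homogeneous components: for homogeneous $x\in\mathscr{L}_a$, $y\in\mathscr{L}_b$ set $\phi_{(d)}(x,y)=\pi_{a+b+d}\,\phi(x,y)$, where $\pi_e$ is the projection onto the degree-$e$ part. Since every graded piece of $\mathscr{L}$ is finite-dimensional, each $\phi_{(d)}$ is again a skew-symmetric biderivation, so it suffices to classify the homogeneous ones and afterwards record which degrees $d$ can actually occur.

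The heart of the argument is the value of $\phi$ on the Witt subalgebra $\langle L_n\rangle$. Assume $\phi$ homogeneous of degree $d$, so $\phi(L_n,L_m)\in\mathscr{L}_{n+m+d}$. The special case $[\phi(x,y),[x,y]]=0$ of Lemma \ref{2.5} gives $[\phi(L_n,L_m),L_{n+m}]=0$ for all $n\neq m$. Writing $\phi(L_n,L_m)$ in the basis of $\mathscr{L}_{n+m+d}$ and imposing this for all $n,m$ with $n+m$ ranging over all integers, a direct centralizer computation shows that the only surviving possibilities are: an $L$-component, which forces $d=0$; an $M$-component $M_{n+m-2\mu}$, which forces $d=-2\mu$ together with $\lambda=1$; and a $Y$-component $Y_{n+m-\mu}$, which forces $d=-\mu$ together with $\lambda=1$. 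Indeed the equalities $(\lambda-1)(n+m)=d+2\mu$ and $(1-\tfrac{\lambda+1}{2})(n+m)=d+\mu$ would have to hold for all $n+m$, whence $\lambda=1$. Thus in each admissible degree $\phi(L_n,L_m)=g(n,m)\,e_{n+m}$ for a scalar function $g$ and a single basis vector $e_{n+m}$.

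Next I would pin down $g$ using the derivation identity $\phi([L_n,L_m],L_k)=[L_n,\phi(L_m,L_k)]+[\phi(L_n,L_k),L_m]$. In each of the three degrees this yields a functional equation whose only solutions are $g(n,m)=c\,(m-n)$; the $\lambda=1$ computation for the $M$- and $Y$-cases is exactly what makes the two sides agree. Consequently, on Witt pairs $\phi=\alpha[\cdot,\cdot]+\beta\phi_0+\gamma\phi_1$ with $\alpha,\beta,\gamma\in\C$, where $\phi_0,\phi_1$ are as in \eqref{301}--\eqref{302}. It then remains to propagate this to all of $\mathscr{L}$. Since $\mathscr{L}$ is generated by $\{L_n\}\cup\{Y_{m+s}\}$ and the $M$'s arise as $[Y_{n+s},Y_{m+s}]$, and since each $\phi(\cdot,z)$, $\phi(x,\cdot)$ is a derivation, the values on the mixed sectors $\phi(L_n,Y_{m+s})$, $\phi(Y_{n+s},Y_{m+s})$, $\phi(L_n,M_m)$, and so on are determined; I would compute these by the same homogeneity-plus-Lemma \ref{2.5} analysis, verify they match $\alpha[\cdot,\cdot]+\beta\phi_0+\gamma\phi_1$, and treat the degenerate commuting pairs via Lemma \ref{2.6}, which places $\phi$ on such pairs in $C_{\mathscr{L}}([\mathscr{L},\mathscr{L}])=Z(\mathscr{L})$ by Lemma \ref{3.1}(3). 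Finally, reading off when the index shifts $-2\mu$ and $-\mu$ land in the appropriate lattices gives the stated trichotomy: $\phi_0$ is available exactly when $2\mu\in\Z$, and $\phi_1$ additionally when $\mu\in s+\Z$, which separates the cases $\mu\in s+\tfrac12+\Z$ and $\mu\in s+\Z$.

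I expect the main obstacle to be the Witt-sector analysis together with the consistent extension to the mixed sectors: one must rule out spurious components hiding in $\phi(L_n,Y_{m+s})$ and $\phi(Y_{n+s},Y_{m+s})$, and handle with care the low-degree and collision cases where distinct $\mathrm{ad}\,L_0$-eigenvalues coincide (for instance when $2\mu$ or $\mu$ is integral), since it is precisely these coincidences that produce the non-inner biderivations $\phi_0,\phi_1$ and underlie the center and derived-algebra computations of Lemma \ref{3.1}.
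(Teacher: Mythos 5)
Your overall skeleton (expand values in the graded basis, exploit Lemma \ref{2.5} and Lemma \ref{2.6} together with Lemma \ref{3.1}, and split into cases according to $\lambda$) is essentially the paper's strategy, and the reduction to homogeneous components is a reasonable organizing device. But the pivotal step, as you state it, does not hold: from $[\phi(L_n,L_m),L_{n+m}]=0$ you conclude that a nonzero $M$- or $Y$-component forces $(\lambda-1)(n+m)=d+2\mu$, resp.\ $\tfrac{1-\lambda}{2}(n+m)=d+\mu$, ``for all $n+m$'', hence $\lambda=1$. The coefficients may, however, depend on $(n,m)$, so the centralizer condition only forces them to vanish away from the single collision value of $n+m$; a component supported at that one degree survives. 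These are not phantom cases: for $\lambda=0$ one gets a residual central $M_{-2\mu}$, and for $\lambda=-1$, $\mu\in s+\Z$ a residual $Y_{-\mu}$, which is not central; with $\lambda=-1$ the structure constant $\tfrac{\lambda+1}{2}$ vanishes, so the cross relation \eqref{306} says nothing about it. In the paper these residuals persist through Claims \ref{3.3}--\ref{3.6} (the coefficients $c_{-\mu}$, $f_{-\mu}$, $l_{-\mu}$, $r_{-\mu}$) and are eliminated only afterwards by cross-sector computations: $c_{-\mu}=0$ comes from the $\phi(L_n,Y_{m+s})$ analysis (see \eqref{311}), and $f_{-\mu}$, $l_{-\mu}$, $r_{-\mu}$ are killed by the biderivation identities \eqref{327} and \eqref{330}, which are also what determine the genuinely underdetermined collision value $\phi(L_n,Y_{m+s})$ with $n+m+s=-\mu$ (equations \eqref{329}--\eqref{332}). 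Your sketch gestures at ``ruling out spurious components'' and ``handling collisions with care'', but the only mechanism you actually supply is the insufficient one, and Lemma \ref{2.6} cannot help here since the offending pairs do not commute.

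A second omission concerns the center. When $Z(\mathscr{L})\neq 0$ (i.e.\ $\lambda=0$, $\mu\in\tfrac12\Z$), all the eigenvalue arguments pin down $\phi$ only modulo $Z(\mathscr{L})$, so one must still exclude an arbitrary central-valued bilinear correction. The paper does this in the last step by writing $\phi(x,y)=\alpha[x,y]+\theta(x,y)M_{-2\mu}$, applying the biderivation identity to get $\theta([x,y],z)=0$, and invoking $\mathscr{L}=[\mathscr{L},\mathscr{L}]$ (Lemma \ref{3.1}(2)) to force $\theta=0$; some such perfectness argument is needed and is absent from your plan. With these two repairs---using the full strength of Lemma \ref{2.5} against pairs $(L_k,L_0)$ and the mixed-sector biderivation identities to remove the collision-degree residuals, and the $\theta$-argument for the central corrections---your approach goes through and is then essentially the proof given in the paper.
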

\begin{proof}
We shall complete the proof by verifying the following ten claims.
\begin{clai}\label{3.3}
There exist $\alpha, \beta, \gamma \in \C$ such that
\begin{equation*}
\phi(L_n, L_m)\equiv\left\{\begin{array}{lll}
\alpha[L_n, L_m]+ c_{-\mu}Y_{-\mu} ({\rm mod\,} Z({\mathscr{L}}))&\mbox{if \ }{\lambda=-1,~ \mu \in }s+\Z;\\[4pt]
\alpha[L_n, L_m]+ \beta(m-n)M_{n+m-2\mu} ({\rm mod\,}Z({\mathscr{L}}))&\mbox{if \ }{\lambda=1,~ \mu \in s+\frac{1}{2}}+\Z;\\[4pt]
\alpha[L_n, L_m]+ \beta(m-n)M_{n+m-2\mu}\\[2pt]\ \ +\gamma(m-n)Y_{n+m-\mu} ({\rm mod\,}Z({\mathscr{L}}))&\mbox{if \ }{\lambda=1,~ \mu \in }s+\Z;\\[4pt]
\alpha[L_n, L_m] ({\rm mod\,}Z({\mathscr{L}})) & \mbox otherwise,
\end{array}\right.
\end{equation*}
for all $n, m\in \Z$, where $c_{-\mu}\in \C$.
\end{clai}

If $n=m$, then the fact $[L_n, L_m]=0$ gives that $\phi(L_n, L_m)\in Z({\mathscr{L}})$ by Lemmas \ref{2.6} , \ref{3.1}(3).

For $n\neq m$, suppose that
$\phi(L_{n},L_{m})=\sum_{i \in \Z}(a_{i}L_{i}+b_{i}M_{i}+c_{i+s}Y_{i+s})$ for some $a_{i}, b_{i}, c_{i+s}\in \C$.

According to Lemma \ref{2.5}, one has
\begin{equation*}
\begin{array}{ll}
0&=\frac{1}{m-n}[[L_{n},L_{m}], \phi(L_n, L_m)]
\\[2pt]
&=\Big[L_{n+m},\mbox{$ \sum\limits_{i \in \Z}$}(a_{i}L_{i}+b_{i}M_{i}+c_{i+s}Y_{i+s})\Big]
\\
&=\sum\limits_{i \in \Z}\Big(a_{i}(i-n-m)L_{n+m+i}+b_{i}(i-\lambda(n+m)+2\mu)M_{n+m+i}
\\
& \quad +c_{i+s}(i+s-\frac{\lambda+1}{2}(n+m)+\mu)Y_{n+m+i+s}\Big),
\end{array}
\end{equation*}
which follows that
\begin{equation}\label{303}
\begin{array}{ll}
a_{i}=0 & {if \ }{i\neq n+m},
\\
b_{i}=0 & {if \ }{i\neq \lambda(n+m)-2\mu},
\\
c_{i+s}=0 & {if \ }{i+s\neq \frac{\lambda+1}{2}(n+m)-\mu}.
\end{array}
\end{equation}
Furthermore, by Lemma \ref{2.5} and the equation \eqref{303}, for any $k\neq 0$, then we get
\begin{equation*}
\begin{array}{ll}
0&=[\phi(L_n, L_m), [L_{k},L_{0}]]-[[L_{n},L_{m}], \phi(L_k, L_0)]
\\
&=[a_{n+m}L_{n+m}+b_{\lambda(n+m)-2\mu}M_{\lambda(n+m)-2\mu}+c_{\frac{\lambda+1}{2}(n+m)-\mu}Y_{\frac{\lambda+1}{2}(n+m)-\mu}, -kL_{k}]
\\
& \quad -[(m-n)L_{n+m}, a_{k}L_{k}+b_{k\lambda -2\mu}M_{k\lambda -2\mu}+c_{\frac{\lambda+1}{2}k-\mu}Y_{\frac{\lambda+1}{2}k-\mu}].
\end{array}
\end{equation*}
Thus,
\begin{eqnarray}
\!\!\!\!\!\!\!\!\!\!\!\!&\!\!\!\!\!\!\!\!\!\!\!\!&
k(n+m-k)a_{n+m}L_{n+m+k}=(m-n)(k-n-m)a_{k}L_{n+m+k},\label{304}
\\
\!\!\!\!\!\!\!\!\!\!\!\!&\!\!\!\!\!\!\!\!\!\!\!\!&
k\lambda(n+m-k)b_{\lambda(n+m)-2\mu}M_{\lambda(n+m)-2\mu+k}=\lambda(m-n)(k-n-m)b_{k\lambda-2\mu}M_{n+m+k\lambda-2\mu},\label{305}
\\
\!\!\!\!\!\!\!\!\!\!\!\!&\!\!\!\!\!\!\!\!\!\!\!\!&
\frac{\lambda+1}{2}k(n+m-k)c_{\frac{\lambda+1}{2}(n+m)-\mu}Y_{\frac{\lambda+1}{2}(n+m)-\mu+k}\nonumber\\
\!\!\!\!\!\!\!\!\!\!\!\!&\!\!\!\!\!\!\!\!\!\!\!\!&
\ \hspace*{20pt}\ =
\frac{\lambda+1}{2}(m-n)(k-n-m)c_{\frac{\lambda+1}{2}k-\mu}Y_{n+m+\frac{\lambda+1}{2}k-\mu}.\label{306}
\end{eqnarray}

Due to the arbitrariness of $k$, we  can conclude $a_{n+m}=\alpha(m-n)$ from \eqref{304} for some  $\alpha\in\C$.

In the following we shall consider the coefficients of $M$ and $Y$ in \eqref{305} and \eqref{306}. It can be divided into four cases by choosing different $\lambda$.\par

\textbf{Case 1.} $\lambda=0$.
One can deduce that $c_{\frac{\lambda+1}{2}(n+m)-\mu}=0$ by the arbitrariness of $k$. If $\mu \in \frac{1}{2}\Z$, then we get $\phi(L_n, L_m)=\alpha[L_n, L_m]+b_{-2\mu}M_{-2\mu}$; otherwise, $\phi(L_n, L_m)=\alpha[L_n, L_m]$. \par

\textbf{Case 2.} $\lambda=-1$.
It is easy to get that $b_{\lambda(n+m)-2\mu}=0$. If $\mu \notin s+\Z$, then $c_{-\mu}=0$. Furthermore, we obtain that $\phi(L_n, L_m)=\alpha[L_n, L_m]+c_{-\mu}Y_{-\mu}$ if $\mu \in s+\Z$. \par

\textbf{Case 3.} $\lambda=1$.
If $\mu \in s+\Z$, then we have $\phi(L_n, L_m)=\alpha[L_n, L_m]+b_{n+m-2\mu}M_{n+m-2\mu}+c_{n+m-\mu}Y_{n+m-\mu}$. Moreover, if $\mu \in s+\frac{1}{2}+\Z$, one has $\phi(L_n, L_m)=\alpha[L_n, L_m]+b_{n+m-2\mu}M_{n+m-2\mu}$. Comparing the coefficients in \eqref{305} and \eqref{306}, we conclude $b_{n+m-2\mu}=\frac{b_{k-2\mu}}{k}(n-m)$ and $c_{n+m-\mu}=\frac{c_{k-\mu}}{k}(n-m)$ by the arbitrariness of $k$.
Hence, we could assume $b_{n+m-2\mu}=\beta(m-n)$ and $c_{n+m-\mu}=\gamma(m-n)$ for some $\beta,\gamma\in\C$.
We have obtained the desired result.\par

\textbf{Case 4.} $\lambda\neq 0, \pm1$.
We also obtain that $\phi(L_n, L_m)=\alpha[L_n, L_m]$.

\begin{clai}\label{3.4}
\begin{equation*}
\phi(L_n, M_m)\equiv\left\{\begin{array}{lll}
\alpha[L_n, M_m]+ f_{-\mu}Y_{-\mu} ({\rm mod\,}Z({\mathscr{L}}))&\mbox{if \ }{\lambda=-1,\quad \mu \in s+\Z};\\[4pt]
\alpha[L_n, M_m] ({\rm mod\,}Z({\mathscr{L}})) & \mbox otherwise,
\end{array}\right.
\end{equation*}
for all $n, m\in \Z$, where $f_{-\mu}\in \C$.
\end{clai}
For any fixed $n, m\!\in\! \Z$, suppose
$\phi(L_{n},M_{m})\!=\!\sum_{i \in \Z}(d_{i}L_{i}+e_{i}M_{i}+f_{i+s}Y_{i+s})$ for some $d_{i}, e_{i}, f_{i+s}\!\in\! \C$.
By Lemma \ref{2.5} and Claim \ref{3.3}, for any $k\neq 0$, we have
\begin{equation*}
\begin{array}{ll}
0&=[\phi(L_n, M_m), [L_{0},L_{k}]]-[[L_{n},M_{m}], \phi(L_0, L_k)]
\\[2pt]
&=\Big[\sum\limits_{i \in \Z}(d_{i}L_{i}+e_{i}M_{i}+f_{i+s}Y_{i+s}), kL_{k}\Big]-[(m-\lambda n+2\mu)M_{n+m}, k\alpha L_{k}]
\\
&=\sum\limits_{i \in \Z}\Big(k(k-i)d_{i}L_{i+k}-k(i-k\lambda+2\mu)e_{i}M_{i+k}-k(i+s-\frac{\lambda+1}{2}k+\mu)f_{i+s}Y_{i+s+k}\Big)
\\
& \quad +k\alpha(m-\lambda n+2\mu)(n+m-k\lambda+2\mu)M_{n+m+k},
\end{array}
\end{equation*}
which implies that
\begin{eqnarray}
\label{307}
&&
d_{i}=0  \mbox{ \ if \ }{i\neq k},
\\
\label{308}
&&
(i-k\lambda+2\mu)e_{i}=0  \mbox{ \ if \ }{i\neq n+m},
\\
\label{309}
&&
(n+m-k\lambda+2\mu)(e_{i}-\alpha(m-\lambda n+2\mu))=0  \mbox{ \ if \ }{i=n+m},
\\
\label{310}
&&
f_{i+s}=0  \mbox{ \ if \ }{i+s\neq \frac{\lambda+1}{2}k-\mu}.
\end{eqnarray}
Firstly, one can easily get that $d_{i}=0$ by the arbitrariness of $k$ in \eqref{307}. If $\lambda \neq 0$, one shows that $e_{i}=0$ if $i\neq n+m$ and $e_{n+m}=\alpha(m-\lambda n+2\mu)$ by \eqref{308} and \eqref{309}. If $\lambda \neq -1$, we obtain that $f_{i+s}=0$ from \eqref{310}. Thus,\par
\textbf{Case 1.} $\lambda =-1$.
We have $\phi(L_{n},M_{m})=\alpha[L_{n},M_{m}]+f_{-\mu}Y_{-\mu}$. If $\mu \notin s+\Z$, then $f_{-\mu}=0$.\par

\textbf{Case 2.} $\lambda =0$.
According to \eqref{308} and \eqref{309}, we have $\phi(L_{n},M_{m})=e_{-2\mu}M_{-2\mu}$ if $n+m=-2\mu$. Furthermore,
$\phi(L_{n},M_{m})=\alpha[L_{n},M_{m}]+e_{-2\mu}M_{-2\mu}$ if $n+m \neq -2\mu$. Thanks to Lemma \ref{3.1}(1), this claim holds.\par

\textbf{Case 3.} $\lambda \neq 0, -1$.
One shows that $\phi(L_{n},M_{m})=\alpha(m-\lambda n+2\mu)M_{n+m}=\alpha[L_{n},M_{m}]$. This completes the claim.

\begin{clai}\label{3.5}
\begin{equation*}
\phi(L_n, Y_{m+s})\equiv\left\{\begin{array}{lll}
l_{-\mu}Y_{-\mu} ({\rm mod\,}Z({\mathscr{L}})) \mbox{~when}~ n+m+s= -\mu &\mbox{if \ }{\lambda=-1,~ \mu \in s+\Z};\\[4pt]
\alpha[L_n, Y_{m+s}]\!+\! l_{-\mu}Y_{-\mu} ({\rm mod\,}Z({\mathscr{L}})) \mbox{~when}~ n\!+\!m\!+\!s\!\neq\! -\mu &\mbox{if \ }{\lambda=-1,~ \mu \in s+\Z};\\[4pt]
\alpha[L_n, Y_{m+s}]+ \gamma (m+s-n+\mu)M_{n+m+s-\mu} ({\rm mod\,}Z({\mathscr{L}}))  &\mbox{if \ }{\lambda=1,~ \mu \in s+\Z};\\[4pt]
\alpha[L_n, Y_{m+s}] ({\rm mod\,}Z({\mathscr{L}})) & \mbox{otherwise},
\end{array}\right.
\end{equation*}
for $n, m\in \Z$, where $l_{-\mu}\in \C$.
\end{clai}

For any fixed $n, m\in \Z$, suppose
$\phi(L_{n},Y_{m+s})=\sum_{i \in \Z}(g_{i}L_{i}+h_{i}M_{i}+l_{i+s}Y_{i+s})$ for some $g_{i}, h_{i}, l_{i+s}\in \C$.\par

\textbf{Case 1.} $\lambda=-1, \mu \in s+\Z$.
According to Lemma \ref{2.5} and Claim \ref{3.3}, for any $k\neq 0$, then we get
\begin{equation*}
\begin{array}{ll}
0&=[\phi(L_n, Y_{m+s}), [L_{0},L_{k}]]-[[L_{n},Y_{m+s}], \phi(L_0, L_k)]
\\[2pt]
&=\Big[\sum\limits_{i \in \Z}(g_{i}L_{i}+h_{i}M_{i}+l_{i+s}Y_{i+s}), kL_{k}\Big]-[(m+s+\mu)Y_{n+m+s}, k\alpha L_{k}+c_{-\mu}Y_{-\mu}]
\\
&=\sum\limits_{i \in \Z}\Big(k(k-i)g_{i}L_{i+k}-k(i+k+2\mu)h_{i}M_{i+k}-k(i+s+\mu)l_{i+s}Y_{i+s+k}\Big)
\\
& \quad +c_{-\mu}(m+s+\mu)(n+m+s+\mu)M_{n+m+s-\mu}+k\alpha(m+s+\mu)(n+m+s+\mu)Y_{n+m+s+k}.
\end{array}
\end{equation*}
Firstly, by the arbitrariness of $k, n$ and $m$, one can easily get from the above formula that
\begin{eqnarray}
\label{311}
&&c_{-\mu}=g_{i}=h_{i}=0,
\\
&&
\label{312}
(i+s+\mu)l_{i+s}=0 \mbox{ \ if \ }{i\neq n+m},
\\
\label{313}
&&(n+m+s+\mu)(l_{i+s}-\alpha(m+s+\mu))=0 \mbox{ \ if \ }{i=n+m}.
\end{eqnarray}
By \eqref{312} and \eqref{313}, we conclude that $\phi(L_{n},Y_{m+s})=l_{-\mu}Y_{-\mu}$ if $n+m+s=\mu$, and
$\phi(L_{n},Y_{m+s})=\alpha[L_{n},Y_{m+s}]+l_{-\mu}Y_{-\mu}$ if $n+m+s \neq -\mu$.

\par
\textbf{Case 2.} $\lambda=1, \mu \in s+\Z$.
Using the similar method of Case \ref{3.3}, we conclude the following equalities:
\begin{eqnarray}
\label{314}
&&
g_{i}=0  \mbox{ \ if \ }{i\neq k},
\\
\label{315}
&&
(i-k+2\mu)h_{i}=0 \mbox{ \ if \ }{i\neq n+m+s-\mu},
\\
\label{316}
&&
(n+m+s-k+\mu)(h_{i}-\gamma(m-n+s+\mu))=0 \mbox{ \ if \ }{i=n+m+s-\mu},
\\
\label{317}
&&
(i+s-k+\mu)l_{i+s}=0 \mbox{ \ if \ }{i\neq n+m},
\\
\label{318}
&&
(n+m+s-k+\mu)(l_{i+s}-\alpha(m-n+s+\mu))=0 \mbox{ \ if \ }{i=n+m}.
\end{eqnarray}
Obviously, it can be easily obtained that $g_{i}=0$ by \eqref{314}.
According to \eqref{315}-\eqref{318}, we have $\phi(L_{n},Y_{m+s})=\alpha[L_{n},Y_{m+s}]+\gamma (m-n+s+\mu)M_{n+m+s-\mu}$.

\par
\textbf{Case 3.} Otherwise.
By Lemma \ref{2.5} and Claim \ref{3.3}, for any $k\neq 0$, we have
\begin{equation*}
\begin{array}{ll}
0&=[\phi(L_n, Y_{m+s}), [L_{0},L_{k}]]-[[L_{n},Y_{m+s}], \phi(L_0, L_k)]
\\[2pt]
&=\Big[\sum\limits_{i \in \Z}(g_{i}L_{i}+h_{i}M_{i}+l_{i+s}Y_{i+s}), kL_{k}\Big]-[(m-\frac{\lambda+1}{2}n+s+\mu)Y_{n+m+s}, k\alpha L_{k}]
\\
&=\sum\limits_{i \in \Z}\Big(k(k-i)g_{i}L_{i+k}-k(i-\lambda k+2\mu)h_{i}M_{i+k}-k(i+s-\frac{\lambda+1}{2}k+\mu)l_{i+s}Y_{i+s+k}\Big)
\\
& \quad +k\alpha(m-\frac{\lambda+1}{2}n+s+\mu)(n+m+s-\frac{\lambda+1}{2}k+\mu)Y_{n+m+s+k},
\end{array}
\end{equation*}
which implies that
\begin{eqnarray}
\label{319}
&&
g_{i}=0 \mbox{ \ if \ }{i\neq k},
\\
\label{320}
&&
(i-k\lambda+2\mu)h_{i}=0 \mbox{ \ if \ }{i\neq k\lambda-2\mu},
\\
\label{321}
&&
(i+s-\frac{\lambda+1}{2}k+\mu)l_{i+s}=0 \mbox{ \ if \ }{i\neq n+m},
\\
\label{322}
&&
(n+m+s-\frac{\lambda+1}{2}k+\mu)(l_{i+s}-\alpha(m-\frac{\lambda+1}{2}n+s+\mu))=0 \mbox{ \ if \ }{i=n+m}.
\end{eqnarray}
We get that $g_{i}=0$ by the arbitrariness of $k$ in \eqref{319}. If $\lambda=0$, we obtain that $\phi(L_{n},Y_{m+s})=\alpha[L_{n},Y_{m+s}]+h_{-2\mu}M_{-2\mu}$.
If $\lambda \neq 0$, by the arbitrariness of $k$ in \eqref{320}, it follows $h_{i}=0$. Since $\lambda \neq -1$ or $\mu \notin s+\Z$, we get
$l_{i+s}=0$ if $i\neq n+m$ by \eqref{321} and \eqref{322}. Thus, $\phi(L_{n},Y_{m+s})=\alpha[L_{n},Y_{m+s}]$.

\begin{clai}\label{3.6}
\begin{equation*}
\phi(Y_{n+s}, Y_{m+s})\equiv\left\{\begin{array}{lll}
\alpha[Y_{n+s}, Y_{m+s}]+ r_{-\mu}Y_{-\mu} ({\rm mod\,}Z({\mathscr{L}}))&\mbox{if \ }{\lambda=-1,~ \mu \in s+\Z};\\[4pt]
\alpha[Y_{n+s}, Y_{m+s}] ({\rm mod\,}Z({\mathscr{L}})) & \mbox otherwise,
\end{array}\right.
\end{equation*}
for all $n, m \in \Z$, where $r_{-\mu} \in \C$.
\end{clai}

For any fixed $n, m\in \Z$, assume
$\phi(Y_{n+s}, Y_{m+s})=\sum_{i \in \Z}(p_{i}L_{i}+q_{i}M_{i}+r_{i+s}Y_{i+s})$ for some $p_{i},q_{i},r_{i+s}\in \C$.
By Lemma \ref{2.5} and Claim \ref{3.3}, for any $k\neq 0$, we have
\begin{equation*}
\begin{array}{ll}
0&=[\phi(Y_{n+s}, Y_{m+s}), [L_{0},L_{k}]]-[[Y_{n+s}, Y_{m+s}], \phi(L_0, L_k)]
\\[2pt]
&=\Big[\sum\limits_{i \in \Z}(p_{i}L_{i}+q_{i}M_{i}+r_{i+s}Y_{i+s}), kL_{k}\Big]-[(m-n)M_{n+m+2s}, k\alpha L_{k}]
\\
&=\sum\limits_{i \in \Z}\Big(k(k-i)p_{i}L_{i+k}-k(i-k\lambda+2\mu)q_{i}M_{i+k}-k(i+s-\frac{\lambda+1}{2}k+\mu)r_{i+s}Y_{i+s+k}\Big)
\\
& \quad +k\alpha(m-n)(n+m+s-k\lambda+2\mu)M_{n+m+2s+k},
\end{array}
\end{equation*}
which implies that
\begin{eqnarray}
\label{323}
&&
p_{i}=0 \mbox{ \ if \ }{i\neq k},
\\
\label{324}
&&
(i-k\lambda+2\mu)q_{i}=0 \mbox{ \ if \ }{i\neq n+m+2s},
\\
\label{325}
&&
(n+m+2s-k\lambda+2\mu)(q_{i}-\alpha(m-n))=0 \mbox{ \ if \ }{i=n+m+2s},
\\
\label{326}
&&
r_{i+s}=0 \mbox{ \ if \ }{i+s\neq \frac{\lambda+1}{2}k-\mu}.
\end{eqnarray}
Hence, $p_{i}=0$ by \eqref{323}. If $\lambda \neq 0$, then we have $q_{i}=0$ if $i\neq n+m+2s$ and $q_{n+m+2s}=\alpha(m-n)$ from \eqref{324} and \eqref{325}). If $\lambda \neq -1$, we obtain that $r_{i+s}=0$ by the arbitrariness of $k$ in \eqref{326}. Thus,\par

\textbf{Case 1.}  $\lambda =-1$.
It has $\phi(Y_{n+s}, Y_{m+s})=\alpha[Y_{n+s}, Y_{m+s}]+r_{-\mu}Y_{-\mu}$. If $\mu \notin s+\Z$, then $r_{-\mu}=0$.\par

\textbf{Case 2.} $\lambda =0$.
We have $\phi(Y_{n+s}, Y_{m+s})=q_{-2\mu}M_{-2\mu}$ if $n+m+2s=-2\mu$, and
$\phi(Y_{n+s}, Y_{m+s})=\alpha[Y_{n+s}, Y_{m+s}]+q_{-2\mu}M_{-2\mu}$ if $n+m+2s \neq -2\mu$.\par

\textbf{Case 3.} $\lambda \neq 0, -1$.
Thus, $\phi(Y_{n+s}, Y_{m+s})=\alpha[Y_{n+s}, Y_{m+s}]$.
This claim holds. \par

Now we would like to simplify the forms of Claims \ref{3.3} to \ref{3.6}.  \par
 If $\lambda=-1$ and $\mu \in s+\Z$. On one hand, for any $n, m \in \Z$, since $\phi$ is a biderivation, we have
\begin{equation}\label{327}
\phi(L_{n}, [Y_{s}, Y_{m+s}])=[\phi(L_{n}, Y_{s}), Y_{m+s}]+[Y_{s}, \phi(L_{n}, Y_{m+s})] .
\end{equation}
If $m\neq 0$, $n+s\neq -\mu$ and $n+m+s\neq -\mu$, from Claims \ref{3.4}, \ref{3.5} and \eqref{327}, then we get that
\begin{equation*}
\begin{array}{ll}
& \alpha m(n+m+2s+2\mu)M_{n+m+2s}+m f_{-\mu}Y_{-\mu}
\\
=&\alpha(s+\mu)(m-n)M_{n+m+2s}+(m+s+\mu)l_{-\mu}M_{m+s-\mu}
\\
&+\alpha(n+m)(m+s+\mu)M_{n+m+2s}-(s+\mu)l_{-\mu}M_{s-\mu},
\end{array}
\end{equation*}
which infers that
\begin{equation}\label{328}
f_{-\mu}=l_{-\mu}=0.
\end{equation}
If $m\neq 0$ and $n+m+s= -\mu$, by Claims \ref{3.4}, \ref{3.5} and \eqref{327}, it is enough to see that
\begin{equation*}
\alpha m(s+\mu)M_{s-\mu}=\alpha(s+\mu)(m-n)M_{s-\mu}-(s+\mu)l_{-\mu}M_{s-\mu},
\end{equation*}
which implies that $l_{-\mu}=-\alpha n$ if $\mu \neq -s$. Thus, one shows that
\begin{equation}\label{329}
\phi(L_{n}, Y_{m+s})=\left\{\begin{array}{lll}
l_{s}Y_{s} &\mbox{if \ }{n+m=0};\\[4pt]
\alpha[L_{n}, Y_{m+s}] & \mbox {if \ }{n+m\neq0},
\end{array}\right.
\end{equation}
for any $n, m \in \Z$.

On the other hand, for any $n, m \in \Z$, we have
\begin{equation}\label{330}
\phi([L_{0}, Y_{n+s}], Y_{m+s})=[L_{0}, \phi(Y_{n+s}, Y_{m+s})]+[\phi(L_{0}, Y_{m+s}), Y_{n+s}].
\end{equation}
If $m\neq 0$ and $n\neq -s-\mu$, according to Claim \ref{3.6}, \eqref{329} and \eqref{330}, we can easily get that
\begin{equation}\label{331}
r_{-\mu}=0.
\end{equation}
If $m=0$ and $n\neq 0$, by Claim \ref{3.6}, \eqref{329}-\eqref{331}, one shows that
\begin{equation*}
-\alpha n(n+s+\mu)M_{n+2s}=-\alpha n(n+2s+2\mu)M_{n+s}+nl_{s}M_{n+s}.
\end{equation*}
which implies that $l_{s}=\alpha (s+\mu)$. Hence, for $n, m \in \Z$, we have
\begin{equation}\label{332}
\phi(L_{n}, Y_{m+s})=\alpha[L_{n}, Y_{m+s}].
\end{equation}

\begin{clai}\label{3.7}
There exist $\alpha, \beta, \gamma\in \C$ such that
\begin{equation*}
\phi(L_n, L_m)\equiv\left\{\begin{array}{lll}
\alpha[L_n, L_m]+ \beta\phi_0(L_n, L_m) ({\rm mod\,}Z({\mathscr{L}}))&\mbox{if \ }{\lambda=1,~ \mu \in s+\frac{1}{2}}+\Z;\\[4pt]
\alpha[L_n, L_m]+ \beta\phi_0(L_n, L_m)+\gamma\phi_1(L_n, L_m) ({\rm mod\,}Z({\mathscr{L}}))&\mbox{if \ }{\lambda=1,~ \mu \in s+\Z};\\[4pt]
\alpha[L_n, L_m] ({\rm mod\,}Z({\mathscr{L}})) & \mbox otherwise,
\end{array}\right.
\end{equation*}
for all $n, m\in \Z$, where $\phi_0$ and $\phi_1$ are given by \eqref{301} and \eqref{302}.
\end{clai}
It is obvious by Claim \ref{3.3} and \eqref{311}.
\begin{clai}\label{3.8}
$\phi(L_n, M_m)\equiv \alpha[L_n, M_m]({\rm mod\,}Z({\mathscr{L}}))$ for all $n, m \in \Z$.
\end{clai}
It is easily obtained  by Claim \ref{3.4} and \eqref{328}.

\begin{clai}\label{3.9}

\begin{equation*}
\phi(L_n, Y_{m+s})\equiv\left\{\begin{array}{lll}
\alpha[L_n, Y_{m+s}]+ \gamma\phi_1(L_n, Y_{m+s}) ({\rm mod\,}Z({\mathscr{L}}))&\mbox{if \ }{\lambda=1,~ \mu \in s+\Z};\\[4pt]
\alpha[L_n, Y_{m+s}] ({\rm mod\,}Z({\mathscr{L}})) & \mbox otherwise,
\end{array}\right.
\end{equation*}
for all $n, m\in \Z$, where $\phi_1$ is given by \eqref{302}.
\end{clai}
 It can be easily obtained by Claim \ref{3.5} and \eqref{332}.
\begin{clai}\label{3.10}
$\phi(Y_{n+s}, Y_{m+s})\equiv \alpha[Y_{n+s}, Y_{m+s}]({\rm mod\,}Z({\mathscr{L}}))$ for $n, m \in \Z$.
\end{clai}
It is obvious by Claim \ref{3.6} and \eqref{331}.
\begin{clai}\label{3.11}
$\phi(M_{n}, M_{m})\equiv 0({\rm mod\,}Z({\mathscr{L}}))$ for $n, m \in \Z$.
\end{clai}
For any fixed $n, m \in \Z$, since $[M_{n}, M_{m}]=0$, it is enough to see that $\phi(M_{n}, M_{m})\in C_{\mathscr{L}}([{\mathscr{L}}, {\mathscr{L}}])$ by Lemma \ref{2.6}. Thus, this claim holds by Lemma \ref{3.1}(3).

\begin{clai}\label{3.12}
$\phi(M_{n}, Y_{m+s})\equiv 0({\rm mod\,}Z({\mathscr{L}}))$ for $n, m \in \Z$.
\end{clai}
It can be obtained by using the similar method of claim \ref{3.11}.

Now, by Claims \ref{3.7}-\ref{3.12}, for any $x, y \in {\mathscr{L}}$, we have
\begin{equation}\label{333}
\phi(x, y)=\left\{\begin{array}{lll}
\alpha[x, y]+ \beta\phi_0(x, y) ({\rm mod\,}Z({\mathscr{L}}))&\mbox{if \ }{\lambda=1,~ \mu \in s+\frac{1}{2}}+\Z;\\[4pt]
\alpha[x, y]+ \beta\phi_0(x, y)+ \gamma\phi_1(x, y) ({\rm mod\,}Z({\mathscr{L}}))&\mbox{if \ }{\lambda=1,~ \mu \in s+\Z};\\[4pt]
\alpha[x, y] ({\rm mod\,}Z({\mathscr{L}})) & \mbox otherwise.
\end{array}\right.
\end{equation}
If $\lambda \neq 0$ or $\mu \notin \frac{1}{2}\Z$, we get $Z({\mathscr{L}})=0$ by Lemma \ref{3.1}(1). Hence, Theorem \ref{3.2} holds.
Then, consider the case $\lambda=0$, $\mu \in \frac{1}{2}\Z$. By Lemma \ref{3.1}(1) and \eqref{333}, we may assume that
$\phi(x, y)=\alpha[x, y]+ \theta(x, y)M_{-2\mu}$, where $\theta$ is a bilinear function from ${\mathscr{L}} \times {\mathscr{L}}$ to $\C$.
We need to show that $\theta$ is the zero function. In fact, by
\begin{equation*}
\phi([x, y], z)=[x, \phi(y, z)]+[\phi(x, z), y],
\end{equation*}
we have that $\theta([x, y], z)=0$ for all $x, y, z\in {\mathscr{L}}$. Note that ${\mathscr{L}}=[{\mathscr{L}}, {\mathscr{L}}]$ in this case by Lemma \ref{3.1}(2).
It follows that $\theta$ is exactly the zero function, as desired.
\end{proof}

\section{Linear commuting maps on ${\mathscr{L}}$}
Let $\cal{A}$ be an associative algebra, a map $\psi: \cal{A}\rightarrow \cal{A}$ is called a commuting map if $\psi(x)x=x\psi(x)$ for
all $x \in \cal{A}$. If we denote $[x, y]=xy-yx$ for $x, y \in \cal{A}$, then a commuting map $\psi$ on $\cal{A}$
can also be defined as $[\psi(x), x]=0$ for all $x\in \cal{A}$.
 \begin{defi}\label{4.1}\rm
 Let $L$ be a Lie algebra, a map $\psi: L \rightarrow L$ is called \emph{commuting} if
 \begin{equation*}
[\psi(x), x]=0 \quad \forall ~x \in L.
\end{equation*}
\end{defi}
\begin{defi}\label{4.2}\rm
Define the following map
\begin{equation*}
\psi(x)=\alpha x+f(x),  \quad \forall ~x\in {\mathscr{L}}
\end{equation*}
on ${\mathscr{L}}$, where $\alpha \in \C$, $ f :{\mathscr{L}}\rightarrow Z({\mathscr{L}})$. Obviously, $\psi(x)$ is a linear commuting map. We call such a map a \emph{standard linear commuting map} on ${\mathscr{L}}$. Other linear commuting maps are called \emph{non-standard}.
\end{defi}
Now we apply Theorem \ref{3.2} to describe linear commuting maps on the deformative Schr$\ddot{\rm o}$dinger-Virasoro Lie algebras ${\mathscr{L}}$ . Obviously, the identity map is a standard linear commuting map. For ${\mathscr{L}}$ with $\lambda=0, \mu \in \frac{1}{2}\Z$ and some
linear function$f$ from ${\mathscr{L}}$ to $\C$, by Lemma \ref{3.1}(1), the map $x\mapsto f(x)M_{-2\mu}$ is also a standard linear commuting map.

For convenience, we first introduce two kinds of linear commuting maps.
 \begin{itemize}
  \item[{\rm(1)}]  For ${\mathscr{L}}$ with ${\lambda=1, \mu \in s+\frac{1}{2}}\Z$, define the following linear map:
\begin{equation}
\psi_0: {\mathscr{L}}\rightarrow {\mathscr{L}}, \quad L_{n}\mapsto M_{n-2\mu},
\end{equation}
the others map to 0.
  \item[{\rm(2)}] For ${\mathscr{L}}$ with ${\lambda=1, \mu \in s+\Z}$, define the following linear map:
\begin{equation}
\psi_1: {\mathscr{L}}\rightarrow {\mathscr{L}},\quad L_{n}\mapsto Y_{n-\mu}, \quad  Y_{n+s}\mapsto M_{n+s-\mu},
\end{equation}
others map to $0$.
\end{itemize}

It can be easily checked that $\psi_0$ and $\psi_1$ are non-standard linear commuting maps.

\begin{theo}\label{4.3}
Each linear commuting map $\psi$ on ${\mathscr{L}}$ is one of the following forms:
\begin{equation*}
\psi(x)=\left\{\begin{array}{lll}
\alpha(x)+ \beta\psi_0(x)&\mbox{if \ }{\lambda=1,~ \mu \in s+\frac{1}{2}}+\Z;\\[4pt]
\alpha(x)+ \beta\psi_0(x)+ \gamma\psi_1(x) &\mbox{if \ }{\lambda=1,~ \mu \in s+\Z};\\[4pt]
\alpha(x)+ f(x)M_{-2\mu} &\mbox{if \ }{\lambda=0,~ \mu \in \frac{1}{2}}\Z;\\[4pt]
\alpha(x) & \mbox otherwise,
\end{array}\right.
\end{equation*}
for any $x \in {\mathscr{L}}$, where $\alpha, \beta, \gamma\in \C$, $\psi_0$ and $\psi_1$ are given by {\rm(4.1)} and {\rm(4.2)}, and $f$ is a linear function form
${\mathscr{L}}$ to $\C$.
\end{theo}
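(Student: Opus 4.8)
The plan is to reduce Theorem \ref{4.3} to the biderivation classification in Theorem \ref{3.2} via the standard polarization device. I would begin from the defining identity $[\psi(x),x]=0$ and linearize it: replacing $x$ by $x+y$ and expanding bilinearly (using that $\psi$ is linear) yields
\begin{equation*}
[\psi(x),y]+[\psi(y),x]=0 \quad \text{for all } x,y\in{\mathscr{L}}.
\end{equation*}
This suggests setting $\phi(x,y):=[\psi(x),y]$. The linearized identity immediately gives $\phi(y,x)=[\psi(y),x]=-[\psi(x),y]=-\phi(x,y)$, so $\phi$ is skew-symmetric. The derivation axiom in the second slot is nothing but the Jacobi identity, since $\phi(x,[y,z])=[\psi(x),[y,z]]=[[\psi(x),y],z]+[y,[\psi(x),z]]=[\phi(x,y),z]+[y,\phi(x,z)]$, and the first axiom then follows formally by combining skew-symmetry with the second. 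Hence $\phi$ is a skew-symmetric biderivation of ${\mathscr{L}}$, and Theorem \ref{3.2} applies to it.

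Next I would record the key compatibility between the two exceptional commuting maps and the two exceptional biderivations: a direct bracket computation using \eqref{101}--\eqref{104} shows
\begin{equation*}
[\psi_0(x),y]=\phi_0(x,y), \qquad [\psi_1(x),y]=\phi_1(x,y) \quad \text{for all } x,y\in{\mathscr{L}},
\end{equation*}
in the respective parameter ranges. For instance, when $\lambda=1$ one has $[\psi_0(L_n),L_m]=[M_{n-2\mu},L_m]=(m-n)M_{n+m-2\mu}=\phi_0(L_n,L_m)$, while all other generator brackets vanish on both sides; the finitely many generator pairs for $\psi_1$ are checked the same way.

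With these in hand, the theorem follows by subtraction. Applying Theorem \ref{3.2} to $\phi(x,y)=[\psi(x),y]$ and using the compatibilities above, in the case $\lambda=1,\ \mu\in s+\frac12+\Z$ we obtain $[\psi(x)-\alpha x-\beta\psi_0(x),\,y]=\phi(x,y)-\alpha[x,y]-\beta\phi_0(x,y)=0$ for every $y$, so $\psi(x)-\alpha x-\beta\psi_0(x)\in Z({\mathscr{L}})$; the analogous computation in the case $\lambda=1,\ \mu\in s+\Z$ places $\psi(x)-\alpha x-\beta\psi_0(x)-\gamma\psi_1(x)$ in $Z({\mathscr{L}})$, and in all remaining cases it places $\psi(x)-\alpha x$ in $Z({\mathscr{L}})$. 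I would then invoke Lemma \ref{3.1}(1): when $\lambda=1$ the center is zero, which removes the residual term and yields the first two forms; when $\lambda=0,\ \mu\in\frac12\Z$ the center is ${\C}M_{-2\mu}$, so the residue equals $f(x)M_{-2\mu}$ with $f$ linear (by linearity of $\psi$), giving the third form; otherwise the center is zero, giving $\psi(x)=\alpha x$.

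The genuine content of the argument is already supplied by Theorem \ref{3.2}; the only point that requires care is that $\phi(x,y)=[\psi(x),y]$ determines $\psi(x)$ merely modulo $Z({\mathscr{L}})$. This central ambiguity is exactly why the nonzero center in the case $\lambda=0,\ \mu\in\frac12\Z$ forces the extra central summand $f(x)M_{-2\mu}$, which cannot be discarded. Keeping track of this central indeterminacy, rather than any individual computation, is the main obstacle.
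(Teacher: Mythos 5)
Your proposal is correct and follows essentially the same route as the paper: define $\phi(x,y)=[\psi(x),y]$, verify it is a skew-symmetric biderivation via the linearized identity $[\psi(x),y]=[x,\psi(y)]$, invoke Theorem \ref{3.2}, and subtract off $\alpha x$, $\beta\psi_0$, $\gamma\psi_1$, using Lemma \ref{3.1}(1) to handle the central residue. Your explicit verification of $[\psi_i(x),y]=\phi_i(x,y)$ and of the central ambiguity only makes the paper's final subtraction step more careful, not different.
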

\begin{proof}
We assume that $\psi$ is a linear commuting map on ${\mathscr{L}}$. Define
\begin{equation}\label{403}
\phi: {\mathscr{L}}\times {\mathscr{L}}\rightarrow {\mathscr{L}} \quad (x, y)\mapsto[\psi(x), y] \quad \forall ~x, y \in {\mathscr{L}}.
\end{equation}
Thus, one has
\begin{equation*}
\phi(x, [y, z])=[\phi(x, y), z]+[y, \phi(x, z)] \quad \forall ~x, y, z \in {\mathscr{L}}.
\end{equation*}
Obviously, $\phi$ is a derivation with respect to the second component. Since $[\psi(x), y]=[x, \psi(y)]$, we conclude that
$\phi$ is also a derivation with respect to the first component. Thus, $\phi$ is a biderivation of ${\mathscr{L}}$. From \eqref{403}, we know that $\phi$ is skew-symmetric. According to Theorem \ref{3.2},  there exist $\alpha, \beta, \gamma \in \C$ such that
\begin{equation*}
\phi(x, y)=\left\{\begin{array}{lll}
\alpha[x, y]+ \beta\phi_0(x, y)&\mbox{if \ }{\lambda=1,~ \mu \in s+\frac{1}{2}}+\Z;\\[4pt]
\alpha[x, y]+ \beta\phi_0(x, y)+ \gamma\phi_1(x, y)&\mbox{if \ }{\lambda=1,~ \mu \in s+\Z};\\[4pt]
\alpha[x, y] & \mbox otherwise,
\end{array}\right.
\end{equation*}
for all $x, y \in {\mathscr{L}}$, where $\phi_0$ and $\phi_1$ are given by \eqref{301} and \eqref{302}. Furthermore, by \eqref{403}, then we obtain that
\begin{equation*}
[\psi(x)-\alpha x, y]=\left\{\begin{array}{lll}
\beta\phi_0(x, y)&\mbox{if \ }{\lambda=1,~ \mu \in s+\frac{1}{2}}+\Z;\\[4pt]
\beta\phi_0(x, y)+\gamma\phi_1(x, y) &\mbox{if \ }{\lambda=1,~ \mu \in s+\Z};\\[4pt]
0 & \mbox otherwise.
\end{array}\right.
\end{equation*}\par
\textbf{Case 1.} $\lambda=1, \mu \in s+\frac{1}{2}+\Z$.
By the definition of $\phi_{0}$ given by \eqref{301},  we conclude that $\psi(x)-\alpha x=\beta \psi_{0}(x)$.
The conclusion holds.\par

\textbf{Case 2.} $\lambda=1, \mu \in s+\Z$.
By the definition of $\phi_{0}$ and $\phi_{1}$ given by \eqref{301} and \eqref{302},  one follows that
$\psi(x)-\alpha x=\beta \psi_{0}(x)+\gamma\psi_1(x)$.\par

\textbf{Case 3.} $\lambda \neq 1$ or $\mu \notin s+\frac{1}{2}\Z$.
Obviously, we have $\psi(x)-\alpha x \in Z({\mathscr{L}})$.
By Lemma \ref{3.1}(1), we only need to consider the case $\lambda=0, \mu \in \frac{1}{2}\Z$. Define $f$ by letting $\psi(x)-\alpha x=f(x)M_{-2\mu}$,
then $f$ is a linear function from ${\mathscr{L}}$ to $\C$, and $\psi(x)=\alpha x+f(x)M_{-2\mu}$. This completes the proof.
\end{proof}

\small


\begin{thebibliography}{9999}\vskip0pt\small
\def\re{\bibitem}\parindent=2ex\parskip=-2pt\baselineskip=-2pt
\bibitem{B1} D. Benkovi$\check{c}$.  ``Biderivations of triangular algebras,'' {\it Lin. Alg. Appl}. {\bf 431}, 1587--1602 (2009).

\bibitem{B2} M. Bre$\check{s}$ar.  ``Commuting maps: A survey,'' {\it Taiwanese J. Math}. {\bf 8}, 361--397 (2004).

\bibitem{C} Z. Chen. ``Biderivations and linear commuting maps on simple generalized Witt algebras over a field,'' {\it Elec. J. Lin. Alg}. {\bf 31},
1--12 (2016).

\bibitem{DW} Y. Du, Y. Wang. ``Biderivations of generalized algebras,'' {\it Lin. Alg. Appl}. {\bf 438}, 4483--4499 (2013).

\bibitem{FLZ} H. Fa, J. Li, Y. Zheng. ``Lie bialgebra structures on the deformative Schr$\ddot{\rm o}$dinger-Virasoro algebras,'' {\it J. Math. Phys}. {\bf 56}, 111706 (2015).

\bibitem{FD} G. Fan, X. Dai. ``Super-biderivations of Lie superalgebras,'' {\it Lin. Mult. Alg}.  1--9 (2016).

\bibitem{N} N. M. Ghosseiri. ``On biderivations of upper triangular matrix ring ,'' {\it Lin. Alg. Appl}. {\bf 438}, 250--260 (2013).

\bibitem{HS} J. Han, J. Li, Y. Su. ``Lie bialgebra structures on the Schr$\ddot{\rm o}$dinger-Virasoro Lie algebra,'' {\it J. Math. Phys}. {\bf 50}, 083504 (2009).

\bibitem{HWX} X. Han, D. Wang, C. Xia. ``Linear commuting maps and biderivations on the Lie algebras $W(a, b)$,'' {\it J. Lie. Theo}. {\bf 26}, 777-786 (2016).
\bibitem{MH1} M. Henkel. ``Schr$\ddot{\rm o}$dinger invariance and strongly anisotropic critical systems,'' {\it J. Stat. Phys}. {\bf 75}, 1023--1029 (1994).

\bibitem{MH2} M. Henkel. ``Phenomenology of local scale invariance: From conformal invariance to dynamical scaling,'' {\it Nucl. Phys}. {\bf B641}, 405--410 (2002).
\bibitem{MH3} M. Henkel. ``Schr$\ddot{\rm o}$dinger invariance and space-time symmetries,'' {\it Nucl. Phys}. {\bf B660}, 407--412 (2003).

\bibitem{JW} Q. Jiang, S. Wang,S. ``Derivations and automorphism groups of the original deformative Schr$\ddot{\rm o}$dinger-Virasoro algebras,'' {\it Alg. Colloq}. {\bf 22(03)}, 517--540 (2015).   ¡± Algebra Colloquium

\bibitem{JS1} J. Li, Y. Su. ``The derivation algebra and automorphism group of the twisted Schr$\ddot{\rm o}$dinger-Virasoro Lie algebra,'' arXiv:0801.2207v1 (2008).

\bibitem{JS2} J. Li, Y. Su. ``Representations of the Schr$\ddot{\rm o}$dinger-Virasoro Lie algebra,'' {\it J. Math. Phys}. {\bf 49}, 053512(14 pages) (2008).

\bibitem{JS3} J. Li. ``2-cocycles of original deformative Schr$\ddot{\rm o}$dinger-Virasoro Lie algebra,'' {\it Sci. China. Ser. A-Math.} {\bf 51}, 1989-1999 (2008).

\bibitem{JS4} J. Li, Y. Su, L, Zhu. ``2-cocycles of twisted deformative Schr$\ddot{\rm o}$dinger-Virasoro algebras,'' {\it Comm. Alg}. {\bf 40}, 1933-1950 (2012).
\bibitem{PEC} E. C. Posner. ``Derivations in prime rings,'' {\it Proc. Amer. Math. Soc}. {\bf 8}, 1093--1100 (1957).

\bibitem{CJ} C. Roger, J. Unterberger. ``The Schr$\ddot{\rm o}$dinger-Virasoro Lie group and algebra: From geometry to representation theory,'' {\it Ann. Henri poincar$\acute{e}$}. {\bf 7}, 1477--1529 (2006).

\bibitem{WYC} D. Wang, X. Yu, Z. Chen. ``Biderivations of the parabolic subalgebras of simple Lie algebras,'' {\it Comm. Alg}. {\bf 39}, 4097--4104 (2011).

\bibitem{WY} D. Wang, X. Yu. ``Biderivations and linear commuting maps on the Schr$\ddot{\rm o}$dinger-Virasoro Lie algebra,'' {\it Comm. Alg}. {\bf 41}, 2166--2173 (2013).

\bibitem{WLX} W. Wang, J. Li, Y. Xu. ``Derivations and automorphisms of twisted deformative Schr$\ddot{\rm o}$dinger-Virasoro Lie algebras,'' {\it Comm. Alg}. {\bf 40}, 3365--3388 (2012).



\bibitem{XWH} C. Xia, D. Wang, X. Han. ``Linear super-commuting maps and super-biderivations on the super-Virasoro algebras,'' {\it Comm. Alg}. {\bf 42}, 5342--5350 (2016).

\bibitem{ZFLW} J. Zhang, S. Feng, H. Li, R. Wu.  ``Generalized biderivations of nest algebras,'' {\it Lin. Alg. Appl}. {\bf 418}, 225--233 (2006).


\end{thebibliography}
\end{document}